\documentclass[10pt]{article}

\usepackage[english]{babel}
\usepackage{amsmath}
\usepackage{placeins}
\usepackage{amssymb}
\usepackage{latexsym}
\usepackage{amsmath}
\usepackage{amsthm}
\usepackage{enumerate}

\newtheorem{theorem}{Theorem}
\newtheorem{predl}{Theorem}
\newtheorem{lemma}{Lemma}
\newtheorem{corollary}{Corollary}

\textheight=200mm
\textwidth=140mm

\begin{document}

\title{On the Strong Law of Large Numbers \\ for Sequences of Pairwise Independent \\ Random Variables}

\author{Valery Korchevsky\thanks{Saint-Petersburg State University of Aerospace Instrumentation, Saint-Petersburg. \endgraf E-mail: \texttt{valery.korchevsky@gmail.com} }}

\date{}

\maketitle

\begin{abstract}
We establish new sufficient conditions for the applicability of the strong law of large numbers (SLLN) for sequences of pairwise independent non-identically distributed random variables. These results generalize Etemadi's extension of Kolmogorov's SLLN for identically distributed random variables. Some of the obtained results hold with an arbitrary norming sequence in place of the classical normalization.
\end{abstract}

\bigskip

\noindent \textbf{Keywords:} strong law of large numbers, pairwise independent random variables.
{\sloppy

}

\bigskip

\bigskip

\noindent {\Large{\textbf{1. Introduction}}}

\medskip

\noindent Let $\{X_{n}\}_{n=1}^{\infty}$ be a sequence of random variables defined on the same probability space and put $S_{n}=\sum_{k=1}^{n} X_{k}$.
{\sloppy

}

The classical Kolmogorov's theorem states that if $\{X_{n}\}_{n=1}^{\infty}$ is a sequence of independent identically distributed random variables and $E|X_{1}| < \infty$ then $S_{n}/n \to EX_{1}$ almost surely. Etemadi~\cite{Etem81} generalized the Kolmogorov theorem replacing the mutual independence assumption by the pairwise independence assumption.

\begin{predl}[\cite{Etem81}]\label{P101}
Let $\{X_{n}\}_{n=1}^{\infty}$ be a sequence of pairwise independent identically distributed random variables. If $E|X_{1}| < \infty$ then $S_{n}/n \to EX_{1}$ almost surely.
{\sloppy

}

\end{predl}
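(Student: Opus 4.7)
The plan is to follow Etemadi's strategy. First, I would reduce to nonnegative summands via $X_n = X_n^+ - X_n^-$: each of $\{X_n^+\}$ and $\{X_n^-\}$ is pairwise independent, identically distributed, and integrable, so it suffices to treat the case $X_n \ge 0$. Second, I would truncate: set $Y_n = X_n \mathbf{1}_{\{X_n \le n\}}$ and $T_n = \sum_{k=1}^{n} Y_k$. Integrability of $X_1$ yields $\sum_{n\ge 1} P(Y_n \ne X_n) = \sum_{n\ge 1} P(X_1 > n) \le E X_1 < \infty$, so by Borel--Cantelli the events $\{Y_n \ne X_n\}$ occur only finitely often a.s., and the problem reduces to proving $T_n/n \to E X_1$ a.s.

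The heart of the argument is a variance estimate along a geometric subsequence. Fix $\alpha > 1$ and set $k_n = \lfloor \alpha^n \rfloor$. The $Y_k$ inherit pairwise independence, so $\mathrm{Var}(T_{k_n}) = \sum_{k=1}^{k_n} \mathrm{Var}(Y_k) \le \sum_{k=1}^{k_n} E Y_k^2$. Chebyshev followed by summation gives, for every $\varepsilon > 0$,
\[
\sum_{n=1}^{\infty} P\!\left( \frac{|T_{k_n} - E T_{k_n}|}{k_n} > \varepsilon \right) \;\le\; \varepsilon^{-2} \sum_{j=1}^{\infty} E Y_j^2 \sum_{n \,:\, k_n \ge j} k_n^{-2}.
\]
The inner sum is $O(j^{-2})$ by the geometric spacing, and $\sum_{j \ge 1} E Y_j^2 / j^2 \le C\, E X_1 < \infty$ via the standard computation $E Y_j^2 = 2\int_0^j t\, P(X_1 > t)\, dt$ combined with Tonelli. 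Borel--Cantelli then yields $(T_{k_n} - E T_{k_n})/k_n \to 0$ a.s. Since $E Y_n \to E X_1$ by monotone convergence, Ces\`aro averaging gives $E T_{k_n}/k_n \to E X_1$, and hence $T_{k_n}/k_n \to E X_1$ a.s.

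Finally, since $Y_k \ge 0$ the partial sums $T_m$ are nondecreasing, so for $k_n \le m \le k_{n+1}$,
\[
\frac{k_n}{k_{n+1}} \cdot \frac{T_{k_n}}{k_n} \;\le\; \frac{T_m}{m} \;\le\; \frac{k_{n+1}}{k_n} \cdot \frac{T_{k_{n+1}}}{k_{n+1}}.
\]
Since $k_{n+1}/k_n \to \alpha$, this yields $\alpha^{-1} E X_1 \le \liminf_m T_m/m \le \limsup_m T_m/m \le \alpha\, E X_1$ a.s., and letting $\alpha \downarrow 1$ through a countable sequence closes the argument. The main obstacle is the double-sum manipulation in the variance estimate; the geometric-subsequence device is precisely what makes pairwise independence (rather than full independence) sufficient, since we only exploit variance additivity on countably many partial sums and then recover the full sequence by monotonicity.
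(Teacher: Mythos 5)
The paper states this result only as a citation of Etemadi's theorem and gives no proof of it, so there is nothing internal to compare against. Your sketch is a correct reproduction of Etemadi's original argument: reduction to nonnegative summands via $X_n^+ , X_n^-$ (preserving pairwise independence), truncation at level $n$ with Borel--Cantelli, the Chebyshev--variance estimate along the geometric subsequence $k_n=\lfloor\alpha^n\rfloor$ (which is exactly where pairwise independence suffices, since only variance additivity is used), and the monotonicity sandwich with $\alpha\downarrow 1$. All the key estimates you invoke --- $\sum_{n:k_n\ge j}k_n^{-2}=O(j^{-2})$ and $\sum_j EY_j^2/j^2<\infty$ via $EY_j^2\le 2\int_0^j tP(X_1>t)\,dt$ and Tonelli --- are standard and go through; the argument is sound.
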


One can find further extensions of the Kolmogorov theorem to wide classes of dependent random variables in the papers~\cite{Mat92} and~\cite{Mat05}.

In the present work we generalize Theorem~\ref{P101} to non-identically distributed random variables. This problem was considered in the several papers. Chandra and Goswami~\cite{ChGos92} established the following result.

\begin{predl}[\cite{ChGos92}]\label{P102}
Let $\{X_{n}\}_{n=1}^{\infty}$ be a sequence of pairwise independent random variables and put $G(x) = \sup_{n \geq 1}  P(|X_{n}| > x)$ for $x \geq 0$. If
{\sloppy

}

\begin{equation*}\label{e102}
    \int_{0}^{\infty} G(x) \, dx < \infty,
\end{equation*}

\noindent Then

\begin{equation*}\label{e103}
      \frac{1}{n} \sum_{k=1}^{n} c_{k} (X_{k} - EX_{k}) \to 0 \quad \mbox{a.s.} \quad (n \to \infty)
\end{equation*}

\noindent for each bounded sequence $\{c_{n}\}$.

\end{predl}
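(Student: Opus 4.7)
The plan is to adapt Etemadi's proof to the non-identically-distributed, weighted setting, using the tail dominator $G$ for uniform estimates.

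\textit{Truncation.} First, I would set $Y_k = X_k\,\mathbf{1}_{\{|X_k|\le k\}}$. Since $G$ is non-increasing, $\sum_{k}P(|X_k|>k)\le \sum_{k}G(k)\le \int_0^\infty G(x)\,dx<\infty$; Borel--Cantelli then gives $X_k=Y_k$ eventually a.s., so $n^{-1}\sum_{k=1}^n c_k(X_k-Y_k)\to 0$ a.s. Similarly $E|X_k|\le M:=\int_0^\infty G(x)\,dx$ and $|EX_k-EY_k|\le kG(k)+\int_k^\infty G(x)\,dx\to 0$, so a Ces\`aro argument handles the centering difference. It therefore suffices to prove $T_n/n\to 0$ a.s., where $T_n=\sum_{k=1}^n c_k(Y_k-EY_k)$.

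\textit{Variance along a geometric subsequence.} Let $C=\sup_k|c_k|$. Pairwise independence gives $\mathrm{Var}(T_n)\le C^2\sum_{k\le n}EY_k^2$, and $EY_k^2\le \int_0^k 2yG(y)\,dy$. A Fubini swap yields $\sum_{j\ge 1}EY_j^2/j^2\le \mathrm{const}\cdot\int_0^\infty G(y)\,dy<\infty$. For $\alpha>1$ and $k_n=\lfloor\alpha^n\rfloor$, Chebyshev together with a second Fubini swap gives $\sum_n k_n^{-2}\sum_{j\le k_n}EY_j^2\le \mathrm{const}\cdot\sum_j EY_j^2/j^2<\infty$, so $\sum_n P(|T_{k_n}|>\varepsilon k_n)<\infty$ for every $\varepsilon>0$; Borel--Cantelli then delivers $T_{k_n}/k_n\to 0$ a.s.

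\textit{Filling the gaps.} The main obstacle is that pairwise independence provides no Kolmogorov maximal inequality with which to control $\max_{k_n\le m\le k_{n+1}}|T_m|$. I would follow Etemadi in passing to monotone sums; since $c_k$ may change sign, I would further split the weights. Writing $c_k=c_k^+-c_k^-$ and $Y_k=Y_k^+-Y_k^-$, $T_n$ becomes a signed combination of four terms $V_n^{(i)}:=U_n^{(i)}-EU_n^{(i)}$, where each $U_n^{(i)}=\sum_{k\le n}c_k^{\pm}Y_k^{\pm}$ has non-negative summands. The previous step applies verbatim to each $U_n^{(i)}$, giving $V_{k_n}^{(i)}/k_n\to 0$ a.s. For $k_n\le m\le k_{n+1}$, monotonicity of $U_n^{(i)}$ and $EU_n^{(i)}$ yields
\begin{equation*}
|V_m^{(i)}|\le |V_{k_n}^{(i)}|+|V_{k_{n+1}}^{(i)}|+\bigl(EU_{k_{n+1}}^{(i)}-EU_{k_n}^{(i)}\bigr),
\end{equation*}
and $EU_{k_{n+1}}^{(i)}-EU_{k_n}^{(i)}\le CM(k_{n+1}-k_n)$ since $EY_k^{\pm}\le M$. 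Dividing by $m\ge k_n$ and using $k_{n+1}/k_n\to\alpha$, one gets $\limsup_m|V_m^{(i)}|/m\le CM(\alpha-1)$ a.s. Letting $\alpha\downarrow 1$ along a countable sequence yields $V_m^{(i)}/m\to 0$ a.s. for each $i$, and recombining the four parts gives $T_m/m\to 0$ a.s.
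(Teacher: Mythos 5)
The paper states this result as Theorem B and cites it to Chandra and Goswami without reproducing a proof, so there is no internal argument to compare against; your proposal has to stand on its own, and it does. The truncation step is sound ($\sum_k G(k)\le\int_0^\infty G<\infty$ and $kG(k)+\int_k^\infty G\to 0$ both follow from monotonicity of $G$ and integrability), the second-moment bound $EY_k^2\le\int_0^k 2yG(y)\,dy$ with the two Fubini swaps is the standard computation (it is essentially the same chain of estimates the paper carries out inside the proof of its Theorem~1), and you correctly identify the real difficulty under pairwise independence — the absence of a maximal inequality — and resolve it the Etemadi way, by passing to the four monotone sums $\sum_k c_k^{\pm}Y_k^{\mp}$, interpolating between the geometric indices $k_n=\lfloor\alpha^n\rfloor$ via monotonicity of $U_m^{(i)}$ and $EU_m^{(i)}$, and sending $\alpha\downarrow 1$ along a countable sequence. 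Each family $\{c_k^{\pm}Y_k^{\mp}\}_k$ is indeed still pairwise independent with variances dominated by $C^2EY_k^2$, so the subsequence step transfers as you claim. The one structural difference worth noting is that the paper's own machinery (its Lemma~1, quoted from the same Chandra--Goswami paper) packages exactly this ``non-negative summands, dominated variance of partial sums, Ces\`aro-bounded means'' step as a black box and reduces to it by splitting $X_n$ into positive and negative parts; your proof re-derives that black box from scratch, which costs a page of Etemadi-style bookkeeping but yields a fully self-contained argument. I find no gap.
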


The following result was obtained by Bose and Chandra~\cite{BoCh94}.

\begin{predl}[\cite{BoCh94}]\label{P103}
Let $\{X_{n}\}_{n=1}^{\infty}$ be a sequence of pairwise independent random variables. Suppose that
{\sloppy

}

\begin{equation}\label{e105}
\int_{0}^{\infty} G(x) \, dx < \infty \quad \mbox{with } G(x) = \sup_{n \geq 1} \frac{1}{n} \sum_{k=1}^{n} P(|X_{k}| > x) \quad  \mbox{for all } x \geq 0,
\end{equation}

\begin{equation*}\label{e106}
\sum_{n=1}^{\infty} P(|X_{n}| > n) < \infty.
\end{equation*}

\noindent Then

\begin{equation}\label{e107}
      \frac{S_{n} - ES_{n}}{n} \to 0 \quad \mbox{a.s.}
\end{equation}

\end{predl}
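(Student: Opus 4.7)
I would follow Etemadi's truncation approach, with the two hypotheses playing complementary roles: $\sum P(|X_n|>n)<\infty$ handles the tail via Borel--Cantelli, while $\int G<\infty$ controls the variances of the truncated variables. First split $X_n=X_n^+-X_n^-$; both hypotheses descend to each part since $|X_n^\pm|\leq|X_n|$, and pairwise independence is preserved by Borel functions, so it suffices to treat $X_n\geq 0$. Set $Y_k=X_k\mathbf{1}_{\{X_k\leq k\}}$ and $T_n=\sum_{k\leq n}Y_k$. The hypothesis $\sum P(X_n>n)<\infty$ combined with Borel--Cantelli gives $X_k=Y_k$ eventually a.s., so $(S_n-T_n)/n\to 0$ a.s.

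\textbf{Means and variances.} Next I would show $(ES_n-ET_n)/n\to 0$: writing $E[X_k\mathbf{1}_{\{X_k>k\}}]=kP(X_k>k)+\int_k^\infty P(X_k>t)\,dt$, the sum $\sum_{k\leq n}kP(X_k>k)$ is $o(n)$ by Kronecker's lemma applied to the convergent series $\sum P(X_k>k)$; and the remaining term, after swapping, is dominated by $\int_0^n tG(t)\,dt+n\int_n^\infty G(t)\,dt$, which is $o(n)$ because $G\in L^1$ forces the tail $\phi(s)=\int_s^\infty G\to 0$ (then apply Cesàro to $\int_0^n\phi$). For the variance bound $\sum_k EY_k^2/k^2<\infty$: use $EY_k^2\leq 2\int_0^k tP(X_k>t)\,dt$, swap summation and integration, and apply Abel summation to $\sum_{k=1}^n P(X_k>t)\leq nG(t)$ to obtain $\sum_{k\geq m}P(X_k>t)/k^2\leq CG(t)/m$; taking $m=\lceil t\rceil$ and integrating yields the bound $C'\int G<\infty$.

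\textbf{Subsequence and filling in.} Fix $\alpha>1$ and set $n_j=\lfloor\alpha^j\rfloor$. By pairwise independence, $\mathrm{Var}(T_{n_j})=\sum_{k\leq n_j}\mathrm{Var}(Y_k)$, and Chebyshev together with the Fubini swap $\sum_j n_j^{-2}\sum_{k\leq n_j}\mathrm{Var}(Y_k)\leq C\sum_k\mathrm{Var}(Y_k)/k^2<\infty$ yield $(T_{n_j}-ET_{n_j})/n_j\to 0$ a.s.\ by Borel--Cantelli. Since $Y_k\geq 0$, both $T_n$ and $ET_n$ are nondecreasing, giving
\[
T_{n_j}-ET_{n_{j+1}}\leq T_n-ET_n\leq T_{n_{j+1}}-ET_{n_j}\qquad(n_j\leq n\leq n_{j+1}).
\]
Dividing by $n\geq n_j$ and rearranging controls $|T_n-ET_n|/n$ by the two a.s.\ null quantities at $n_j,n_{j+1}$ plus the residual $(ET_{n_{j+1}}-ET_{n_j})/n_j$; passing to the limit $j\to\infty$ and then $\alpha\downarrow 1$ should close the argument.

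\textbf{Main obstacle.} The delicate step is the sandwich: the residual $(ET_{n_{j+1}}-ET_{n_j})/n_j$ does not obviously vanish from the crude bound $ET_n\leq n\int G$, which only yields $O(\alpha)$. What I expect to need is a finer block estimate showing that $\sum_{n_j<k\leq n_{j+1}}EY_k$ is essentially of order $n_{j+1}-n_j$, up to a lower-order term; then letting $\alpha\downarrow 1$ drives the error to $0$. I would extract this from the integral representation $EY_k\leq\int_0^k P(X_k>t)\,dt$ combined with a block version of $\sum_k P(X_k>t)\leq nG(t)$ that localises the mass controlled by $G$ to the block $(n_j,n_{j+1}]$; this bookkeeping is the technical heart of the proof.
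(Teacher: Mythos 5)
Your reduction to non-negative variables, the truncation $Y_k=X_k\mathbf{1}_{\{X_k\le k\}}$, the Borel--Cantelli step for $(S_n-T_n)/n$, the Kronecker/Ces\`aro estimate for $(ES_n-ET_n)/n$, and the variance bound $\sum_k \mathrm{Var}(Y_k)/k^2<\infty$ are all correct and match the architecture of the paper's proof of its Theorem~1 (of which the present statement is the case $a_n=n$); your Abel-summation derivation of the variance bound is in fact tidier than the chain of inequalities used there. The gap is exactly where you suspect it: the sandwich step with the geometric subsequence $n_j=\lfloor\alpha^j\rfloor$ cannot be closed, and the ``finer block estimate'' you hope for does not exist under the stated hypotheses. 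The condition $\frac1n\sum_{k\le n}P(|X_k|>x)\le G(x)$ controls averages taken from $k=1$, not over blocks, so the means $EY_k$ may concentrate on short, sparse blocks: one can arrange $\sum_{k\le n}EX_k\le Mn$ for all $n$ while $\sum_{m_i<k\le(1+\varepsilon_i)m_i}EX_k\ge Mm_i/2$ with $\varepsilon_i\to0$ and $m_i$ growing rapidly. For any fixed $\alpha>1$ such a block eventually lies inside at most two consecutive geometric blocks, so $\limsup_j\,(ET_{n_{j+1}}-ET_{n_j})/n_j$ stays bounded below by a positive constant independent of $\alpha$, and letting $\alpha\downarrow1$ gains nothing. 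Since $ET_n/n$ need not converge here (unlike in Etemadi's i.i.d.\ setting), the residual is a real obstruction, not a bookkeeping nuisance.

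The repair --- which is what the paper outsources to its Lemma~1, quoted from Chandra and Goswami --- is to choose the subsequence adapted to the partial sums of the means rather than to $n$: roughly $\nu_j=\inf\{n:\,ET_n\ge\alpha^j\}$. Then $ET_{\nu_{j+1}}-ET_{\nu_j}\le(\alpha-1)\,ET_{\nu_j}+EY_{\nu_{j+1}}$, and after dividing by $\nu_j$ the residual is at most $(\alpha-1)\sup_n\frac1n\sum_{k\le n}EX_k$ plus lower-order terms, hence at most $(\alpha-1)\int_0^\infty G$ up to $o(1)$, which does vanish as $\alpha\downarrow1$; this is precisely where the hypothesis $\sup_n\frac1n\sum_{k\le n}EX_k<\infty$ (a consequence of $\int_0^\infty G<\infty$) enters. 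The monotonicity of $T_n$ and $ET_n$ still drives the sandwich, but the Chebyshev--Borel--Cantelli step must be rechecked along $\{\nu_j\}$, using that $\nu_j$ grows at least geometrically because $ET_n\le n\int_0^\infty G$. So your proof is complete except for this one step, and that step requires a genuinely different choice of subsequence, not a sharper block estimate.
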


Kruglov~\cite{Kruglov94} proved the next generalization of Theorem~\ref{P101}.

\begin{predl}[\cite{Kruglov94}]\label{P104}

Let $\{X_{n}\}_{n=1}^{\infty}$ be a sequence of pairwise independent random variables. Assume that

\begin{equation}\label{e108}
\sup_{n \geq 1} E|X_{n}| < \infty.
\end{equation}

\noindent If there exists a random variable $X$ such that $E|X|< \infty$ and
{\sloppy

}

\begin{equation*}\label{e109}
\sup_{n \geq 1} \frac{1}{n} \sum_{k=1}^{n} P(|X_{k}| > x) \leq C P(|X|>x) \quad \mbox{for all } x \geq 0,
\end{equation*}

\noindent where $C$ is a positive constant, then relation~\eqref{e107} holds.

\end{predl}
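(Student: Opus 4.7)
I would follow Etemadi's pattern. Writing $X_n = X_n^+ - X_n^-$, each of the two nonnegative sequences is pairwise independent, still satisfies $\sup_n E|\cdot|<\infty$, and inherits the averaged tail bound since $P(X_n^\pm>x)\le P(|X_n|>x)$; so I reduce to $X_n\ge 0$. Introduce the truncation $Y_k=X_k I(X_k\le k)$, set $T_n=\sum_{k=1}^n Y_k$, and decompose
\begin{equation*}
(S_n-ES_n)/n \;=\; (S_n-T_n)/n \;-\; E(S_n-T_n)/n \;+\; (T_n-ET_n)/n.
\end{equation*}
I would show each piece tends to $0$.

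\textbf{Truncation pieces.} The almost-sure convergence of $(S_n-T_n)/n$ reduces via Borel--Cantelli to $\sum_k P(X_k>k)<\infty$. Grouping $k$ dyadically, $P(X_k>k)\le P(X_k>2^{j-1})$ for $k\in(2^{j-1},2^j]$, and the hypothesis averages the resulting sum to at most $C\cdot 2^jP(|X|>2^{j-1})$; summing in $j$ gives at most $4CE|X|<\infty$ by the standard estimate $\sum_j 2^j P(|X|>2^{j-1})\le 4E|X|$. For the deterministic remainder, I split $E[X_kI(X_k>k)]=kP(X_k>k)+\int_k^\infty P(X_k>t)\,dt$; Kronecker's lemma applied to the series just shown summable kills the first term after averaging, and Fubini plus the hypothesis bound the averaged second term by a multiple of $n^{-1}\int_0^n tP(|X|>t)\,dt+\int_n^\infty P(|X|>t)\,dt$, each vanishing as $n\to\infty$ by a routine truncation of $|X|$ at a large level combined with $E|X|<\infty$.

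\textbf{The fluctuation piece (main step).} Pairwise independence of the $Y_k$ gives $\operatorname{Var}(T_n)\le\sum_k EY_k^2\le 2\sum_{k=1}^n\int_0^k tP(X_k>t)\,dt$. Switching sum with integral and invoking the hypothesis,
\begin{equation*}
\operatorname{Var}(T_n) \;\le\; 2Cn\int_0^n tP(|X|>t)\,dt.
\end{equation*}
Fix $\alpha>1$ and set $n_m=\lfloor\alpha^m\rfloor$. Fubini together with the elementary estimate $\sum_{m:\,n_m>t}1/n_m\le C_\alpha/t$ then yields
\begin{equation*}
\sum_{m\ge 1}\operatorname{Var}(T_{n_m})/n_m^2 \;\le\; 2CC_\alpha\int_0^\infty P(|X|>t)\,dt \;=\; 2CC_\alpha\,E|X|\;<\;\infty,
\end{equation*}
so Chebyshev plus Borel--Cantelli give $(T_{n_m}-ET_{n_m})/n_m\to 0$ a.s. Monotonicity of $T_n$ (from $X_k\ge 0$) allows sandwiching $T_n-ET_n$ between $T_{n_m}-ET_{n_{m+1}}$ and $T_{n_{m+1}}-ET_{n_m}$ for $n\in[n_m,n_{m+1}]$, while the hypothesis $\sup_n E|X_n|\le M$ bounds $ET_{n_{m+1}}-ET_{n_m}\le M(n_{m+1}-n_m)$, so $\limsup_n|(T_n-ET_n)/n|\le M(\alpha-1)$; letting $\alpha\downarrow 1$ completes the argument. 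The main obstacle is the variance bound: the averaged-tail hypothesis only yields a single factor of $n$ (not $n^2$) precisely because of the cutoff $t<k$ inside $EY_k^2$, and that single $n$ is what $n_m^2$ in Chebyshev absorbs when passing to the geometric subsequence, where the assumption $\sup_n E|X_n|<\infty$ becomes indispensable for the monotone gap-filling.
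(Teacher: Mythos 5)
Your argument is correct, and it takes a genuinely different route from the paper. The paper never proves Theorem~\ref{P104} directly: it cites it, and instead establishes the stronger Corollary~\ref{C202} (the same conclusion with hypothesis~\eqref{e108} deleted) by verifying the hypotheses of Bose and Chandra's Theorem~\ref{P103} --- the dominated-average-tail condition yields both the Ces\`{a}ro integrability condition~\eqref{e105} and, via exactly the dyadic blocking you use for your Borel--Cantelli step, the summability $\sum_n P(|X_n|>n)<\infty$; the almost sure convergence itself is then delegated to Lemma~\ref{Lem301} (from Chandra--Goswami), whose mean condition is only $\sup_n a_n^{-1}\sum_{k\le n}EY_k<\infty$, i.e.\ a bound on Ces\`{a}ro averages of the means rather than on the individual $E|X_n|$. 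Your proof is instead a self-contained Etemadi-style argument: variance bound along the geometric subsequence $n_m=\lfloor\alpha^m\rfloor$, Chebyshev plus Borel--Cantelli, then monotone interpolation between the $n_m$. The structural difference is that your gap-filling step genuinely consumes~\eqref{e108}: you need $ET_{n_{m+1}}-ET_{n_m}\le M(n_{m+1}-n_m)$ so that the $\limsup$ is $O(\alpha-1)$ and can be sent to zero; the averaged-tail hypothesis alone only gives $ET_{n_{m+1}}-ET_{n_m}\le C\,n_{m+1}E|X|$, which does not vanish as $\alpha\downarrow 1$. So your route proves exactly Theorem~\ref{P104} as stated, elementarily and without importing Lemma~\ref{Lem301}, while the paper's route buys the sharper Corollary~\ref{C202}. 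Two harmless slips to tidy up: your bound $\sum_{m:\,n_m>t}1/n_m\le C_\alpha/t$ is only valid for $t\ge 1$ (for $t<1$ the sum is merely bounded by a constant, contributing an extra additive term to the final estimate, which is still finite); and the dyadic estimate $\sum_j 2^jP(|X|>2^{j-1})\le 4E|X|$ should exclude the term $k=1$, handled trivially by $P(X_1>1)\le 1$.
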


The aim of present work is to generalize Theorems~\ref{P103} and~\ref{P104}. We present a generalization of Theorem~\ref{P103} using an arbitrary norming sequence in place of the classical normalization. Furthermore we show that condition~\eqref{e108} in Theorems~\ref{P104} can be dropped.
{\sloppy

}

In order to prove the theorems in the present work, we use methods developed by Bose and Chandra~\cite{BoCh94} (see also Chandra~\cite{Ch12}).
{\sloppy

}

\bigskip

%\newpage

\noindent {\Large{\textbf{2. Main results}}}

\medskip

\begin{theorem}\label{T201}
Let $\{X_{n}\}_{n=1}^{\infty}$ be a sequence of pairwise independent random variables. Assume that $\{a_{n}\}_{n=1}^{\infty}$ is non-decreasing unbounded sequence of positive numbers. Suppose that
{\sloppy

}

\begin{equation}\label{e201}
\int_{0}^{\infty} G(x) \, dx < \infty \quad \mbox{with } G(x) = \sup_{n \geq 1} \frac{1}{a_{n}} \sum_{k=1}^{n} P(|X_{k}| > x) \quad  \mbox{for all } x \geq 0,
\end{equation}

\begin{equation}\label{e202}
\sum_{n=1}^{\infty} P(|X_{n}| > a_{n}) < \infty.
\end{equation}

\noindent Then

\begin{equation*}\label{e203}
      \frac{S_{n} - ES_{n}}{a_{n}} \to 0 \quad \mbox{a.s.}
\end{equation*}

\end{theorem}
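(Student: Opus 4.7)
I would follow the Bose--Chandra approach to Theorem~\ref{P103}, adapted to the general normalization $\{a_n\}$.

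\emph{Reduction and truncation.} Writing $X_k = X_k^+ - X_k^-$, both sequences $\{X_k^\pm\}_{k\geq 1}$ are pairwise independent and satisfy \eqref{e201} and \eqref{e202} (since $P(X_k^\pm > x) \leq P(|X_k|>x)$), so I reduce to the case $X_k \geq 0$. Truncate $Y_k = X_k \mathbf{1}\{X_k\leq a_k\}$, set $T_n = \sum_{k=1}^n Y_k$, and decompose
\[
  S_n - ES_n = (T_n - ET_n) + \sum_{k=1}^n (X_k-Y_k) - \sum_{k=1}^n E(X_k-Y_k).
\]

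\emph{Control of the tails.} By \eqref{e202} and Borel--Cantelli, $X_k = Y_k$ eventually a.s., whence $a_n^{-1}\sum_{k=1}^n (X_k-Y_k)\to 0$ a.s. For the expectation tail, $E(X_k-Y_k) = a_k P(X_k>a_k) + \int_{a_k}^\infty P(X_k>t)\,dt$. Kronecker's lemma applied to the convergent series $\sum P(|X_k|>a_k)$ handles the first piece; Fubini together with $\sum_{k=1}^n P(|X_k|>t)\leq a_n G(t)$ bounds the second piece by
\[
  \sum_{k=1}^n \int_{a_k}^\infty P(X_k>t)\,dt \leq \int_0^{a_n} tG(t)\,dt + a_n\int_{a_n}^\infty G(t)\,dt,
\]
both of which divided by $a_n$ tend to $0$, since $\int G<\infty$ together with $G$ non-increasing forces $tG(t)\to 0$, and hence $a_n^{-1}\int_0^{a_n} tG(t)\,dt \to 0$.

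\emph{Main a.s.\ convergence.} By pairwise independence and an analogous Fubini bound, $\operatorname{Var}(T_n) \leq \sum EY_k^2 \leq 2a_n\int_0^{a_n} tG(t)\,dt$, so $\operatorname{Var}(T_n)/a_n^2\to 0$. Fix $\alpha>1$ and select a strictly increasing subsequence $\{n_j\}$ with $a_{n_{j+1}} \geq \alpha\, a_{n_j}$, and put $m_j = n_{j+1}-1$. A Fubini exchange gives
\[
  \sum_j \frac{1}{a_{n_j}}\int_0^{a_{n_j}} tG(t)\,dt \leq C_\alpha \int_0^\infty G(t)\,dt < \infty,
\]
so Chebyshev and Borel--Cantelli yield $(T_{n_j}-ET_{n_j})/a_{n_j}\to 0$ and $(T_{m_j}-ET_{m_j})/a_{m_j}\to 0$ a.s. Since $Y_k\geq 0$, both $T_n$ and $ET_n$ are non-decreasing, so Etemadi's monotonicity trick gives, for $n_j\leq n\leq m_j$,
\[
  |T_n-ET_n| \leq 2|T_{n_j}-ET_{n_j}| + |T_{m_j}-ET_{m_j}| + 2(ET_{m_j}-ET_{n_j});
\]
dividing by $a_n\geq a_{n_j}$ kills the first two terms a.s.\ as $j\to\infty$, and one then lets $\alpha\downarrow 1$ along a countable sequence.

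\emph{Main obstacle.} The delicate step is controlling the residual $(ET_{m_j}-ET_{n_j})/a_{n_j}$ in the gap-filling. In Etemadi's iid setting this residual is essentially $(\alpha-1)E|X_1|$ and vanishes as $\alpha\to 1$, but here the partial expectations $ET_n/a_n$ need not stabilize. The remedy is to estimate $\sum_{k=n_j+1}^{m_j}EY_k$ by combining both hypotheses inside the block: use \eqref{e201} to bound the $P(X_k>t)$ sums in terms of $a_{m_j}G(t) \leq \alpha a_{n_j}G(t)$, and use the truncation computation of step two together with \eqref{e202} to absorb the loss from passing between $Y_k$ and $X_k$. This should show that the residual tends to $0$ as $\alpha\downarrow 1$ uniformly in $j$, closing the argument.
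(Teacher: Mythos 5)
Your reduction to non-negative variables, the truncation $Y_k = X_k\mathbf{1}\{X_k\le a_k\}$, the treatment of the centering term via Kronecker's lemma and the tail of $\int G$, and the Borel--Cantelli step for $\sum P(X_k\ne Y_k)$ all match the paper's steps and are correct. Where you diverge is the main almost-sure convergence of $(T_n-ET_n)/a_n$: the paper does not run Etemadi's subsequence argument by hand but instead verifies $\sum_n \operatorname{Var}(Y_n)/a_n^2<\infty$ (by a double-sum rearrangement against $\int_0^\infty G<\infty$) together with $\sup_n a_n^{-1}\sum_{k\le n}EY_k\le\int_0^\infty G<\infty$, and then invokes Lemma~\ref{Lem301} (a consequence of Chandra--Goswami), which packages the entire subsequence/monotonicity machinery.

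The step you flag as the ``main obstacle'' is a genuine gap, and the proposed remedy does not close it. With blocks chosen so that $a_{n_{j+1}}\ge\alpha a_{n_j}$, the only bound your hypotheses give on the residual is
\[
\frac{ET_{m_j}-ET_{n_j}}{a_{n_j}}\;\le\;\frac{1}{a_{n_j}}\sum_{k=n_j+1}^{m_j}EX_k\;\le\;\frac{a_{m_j}}{a_{n_j}}\int_0^\infty G(t)\,dt\;\le\;\alpha\int_0^\infty G(t)\,dt,
\]
which is exactly what your ``remedy'' (bounding $\sum_{k\le m_j}P(X_k>t)$ by $\alpha a_{n_j}G(t)$) reproduces, and which does \emph{not} tend to $0$ as $\alpha\downarrow 1$: nothing prevents all of the mass $\sum_{k\le m_j}EX_k\approx a_{m_j}\int G$ from being concentrated in the single block $(n_j,m_j]$. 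The alternative bound $\sum_{k=n_j+1}^{m_j}P(X_k>t)\le m_j-n_j$ rescues the case $a_n=n$ (there $m_j-n_j\le(\alpha-1)n_j$), but for slowly growing $a_n$ (e.g.\ $a_n\sim\log n$) the block length $m_j-n_j$ is enormous compared with $a_{n_j}$ and gives nothing. The standard fix, used by Etemadi and by Chandra--Goswami, is to choose the blocks so that $ET_{n_j}$ --- not $a_{n_j}$ --- grows geometrically; then $ET_{m_j}-ET_{n_j}\le(\alpha-1)ET_{n_j}\le(\alpha-1)a_{n_j}\int_0^\infty G$ and the residual does vanish as $\alpha\downarrow1$. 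But with that choice your Chebyshev--Borel--Cantelli step is no longer automatic, because $\sum_j\operatorname{Var}(T_{n_j})/a_{n_j}^2<\infty$ relied on geometric growth of $a_{n_j}$; one then needs the stronger termwise summability $\sum_n\operatorname{Var}(Y_n)/a_n^2<\infty$ (which you never establish --- you only prove $\operatorname{Var}(T_n)/a_n^2\to0$) and the bookkeeping of Lemma~\ref{Lem301}. So either prove that variance series estimate and cite the Chandra--Goswami lemma as the paper does, or redo the block argument with $ET$-adapted blocks and supply the missing summability; as written, the argument does not go through.
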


Theorem~\ref{T201} generalizes Theorem~\ref{P103}, which corresponds to the case $a_{n} = n$ for all ${n \geq 1}$.
{\sloppy

}

\begin{theorem}\label{T202}
Let $\{X_{n}\}_{n=1}^{\infty}$ be a sequence of pairwise independent random variables. If there exists function $H(x)$ such that $H(x)$ is non-increasing in the interval $x \geq 0$,
{\sloppy

}

\begin{equation}\label{e205}
\int_{0}^{\infty} H(x) \, dx < \infty, \quad \mbox{and} \quad \sup_{n \geq 1} \frac{1}{n} \sum_{k=1}^{n} P(|X_{k}| > x) \leq H(x) \quad \mbox{for all } x \geq 0,
\end{equation}

\noindent then

\begin{equation*}\label{e206}
 \frac{S_{n} - ES_{n}}{n} \to 0 \quad \mbox{a.s.}
\end{equation*}

\end{theorem}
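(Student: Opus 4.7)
The plan is to deduce Theorem~\ref{T202} from Theorem~\ref{T201} applied with the natural norming $a_{n} = n$, which is non-decreasing, unbounded, and positive, by verifying both hypotheses \eqref{e201} and \eqref{e202} from assumption~\eqref{e205}. Note that $\int_{0}^{\infty} H < \infty$ together with $H$ non-increasing automatically forces $H(x) < \infty$ for every $x > 0$, a fact used freely below.

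Condition~\eqref{e201} is immediate: the function $G$ appearing in \eqref{e201} is pointwise dominated by $H$, so $\int_{0}^{\infty} G(x)\,dx \leq \int_{0}^{\infty} H(x)\,dx < \infty$.

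The main step is establishing \eqref{e202}, i.e., $\sum_{n=1}^{\infty} P(|X_{n}| > n) < \infty$. I would use a dyadic block decomposition. For each $j \geq 0$ and each $k$ with $2^{j} < k \leq 2^{j+1}$, monotonicity of $x \mapsto P(|X_{k}| > x)$ gives $P(|X_{k}| > k) \leq P(|X_{k}| > 2^{j})$, and summing over this block,
\[
\sum_{k=2^{j}+1}^{2^{j+1}} P(|X_{k}| > k) \;\leq\; \sum_{k=1}^{2^{j+1}} P(|X_{k}| > 2^{j}) \;\leq\; 2^{j+1} H(2^{j}),
\]
the last inequality coming from \eqref{e205} applied with $n = 2^{j+1}$ and $x = 2^{j}$. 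Summing over $j \geq 0$ reduces matters to the finiteness of $\sum_{j \geq 0} 2^{j} H(2^{j})$, which is a standard consequence of $H$ being non-increasing and integrable: the estimate $\int_{2^{j-1}}^{2^{j}} H(x)\,dx \geq 2^{j-1} H(2^{j})$ yields $2^{j} H(2^{j}) \leq 2 \int_{2^{j-1}}^{2^{j}} H(x)\,dx$ for $j \geq 1$, whence $\sum_{j \geq 1} 2^{j} H(2^{j}) \leq 2 \int_{1}^{\infty} H(x)\,dx$, while the $j = 0$ term is the finite number $H(1)$. Absorbing the single leftover $P(|X_{1}| > 1) \leq 1$, one obtains $\sum_{n=1}^{\infty} P(|X_{n}| > n) < \infty$.

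With both hypotheses of Theorem~\ref{T201} verified, its conclusion $(S_{n} - ES_{n})/n \to 0$ almost surely is exactly the desired statement. The only place that requires any thought is the dyadic estimate for \eqref{e202}; once it is in place, Theorem~\ref{T202} falls out as a clean corollary of Theorem~\ref{T201}.
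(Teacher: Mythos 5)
Your proposal is correct and follows essentially the same route as the paper: the same dyadic block estimate reducing $\sum_{n} P(|X_{n}|>n)<\infty$ to $\sum_{j} 2^{j}H(2^{j})<\infty$, and the same observation that $G\leq H$ pointwise gives the integral condition. The only cosmetic difference is that you invoke Theorem~\ref{T201} with $a_{n}=n$ while the paper cites Theorem~\ref{P103} directly, but these coincide in that case, as the paper itself notes.
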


As a consequence of Theorem~\ref{T202} we immediately obtain the following result.

\begin{corollary}\label{C202}
Let $\{X_{n}\}_{n=1}^{\infty}$ be a sequence of pairwise independent random variables. If there exists a random variable $X$ such that $E|X|< \infty$ and
{\sloppy

}

\begin{equation*}\label{e207}
\sup_{n \geq 1} \frac{1}{n} \sum_{k=1}^{n} P(|X_{k}| > x) \leq C P(|X|>x) \quad \mbox{for all } x \geq 0,
\end{equation*}

\noindent where $C$ is a positive constant, then

\begin{equation*}\label{e208}
 \frac{S_{n} - ES_{n}}{n} \to 0 \quad \mbox{a.s.}
\end{equation*}

\end{corollary}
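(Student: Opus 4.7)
My plan is to derive the corollary immediately from Theorem~\ref{T202} by exhibiting a suitable dominating function $H$. The natural candidate is $H(x) := C\,P(|X|>x)$, since this is precisely the right-hand side of the hypothesis of the corollary.

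The verification then splits into three bookkeeping items. First, I would observe that $H$ is non-increasing on $[0,\infty)$, because the tail function $x \mapsto P(|X|>x)$ is non-increasing and $C>0$ is a fixed positive constant. Second, I would apply the layer-cake (or tail) identity $E|X|=\int_{0}^{\infty} P(|X|>x)\,dx$, valid for the non-negative random variable $|X|$, to obtain $\int_{0}^{\infty} H(x)\,dx = C\,E|X|<\infty$, using the hypothesis $E|X|<\infty$. Third, the domination inequality required in~\eqref{e205}, namely $\sup_{n\geq 1}\tfrac{1}{n}\sum_{k=1}^{n}P(|X_{k}|>x)\leq H(x)$, is literally the assumption of the corollary once $H(x)=CP(|X|>x)$ is substituted.

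Since the pairwise independence of $\{X_{n}\}_{n=1}^{\infty}$ is common to both statements, all hypotheses of Theorem~\ref{T202} are now in force, and its conclusion $(S_{n}-ES_{n})/n \to 0$ almost surely is exactly the desired assertion. I do not anticipate any real obstacle in this argument: it is a direct substitution, and the only minor subtlety is invoking the tail-integral formula for $E|X|$, which relies solely on the non-negativity of $|X|$.
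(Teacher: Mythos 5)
Your proof is correct and matches the paper's intended argument: the paper derives Corollary~\ref{C202} as an immediate consequence of Theorem~\ref{T202}, and taking $H(x)=C\,P(|X|>x)$ with the tail-integral identity $\int_{0}^{\infty}P(|X|>x)\,dx=E|X|$ is exactly the substitution required. Nothing is missing.
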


Corollary~\ref{C202} shows that we can omit condition~\eqref{e108} in Theorem~\ref{P104}.
{\sloppy

}

\bigskip

\noindent {\Large{\textbf{3. Proofs}}}

\medskip

\noindent To prove Theorems~\ref{T201} we need the following proposition that is a consequence of Theorem~1 in~\cite{ChGos92}.

\begin{lemma}\label{Lem301} Let $\{X_{n}\}_{n=1}^{\infty}$ be a sequence of non-negative random variables with finite variances. Assume that $\{a_{n}\}_{n=1}^{\infty}$ is non-decreasing unbounded sequence of positive numbers. Suppose that
{\sloppy

}

\begin{equation}\label{e302}
      Var(S_{n}) \leq C \sum_{k=1}^{n} Var(X_{k}) \qquad \mbox{for all } n \geq 1,
\end{equation}

\noindent where $C$ is a positive constant,

\begin{equation}\label{e303}
     \sum_{n=1}^{\infty} \frac{Var(X_{n})}{a^{2}_{n}} < \infty,
\end{equation}

\begin{equation*}\label{e304}
       \sup_{n \geq 1} \frac{1}{a_{n}} \sum_{k=1}^{n} EX_{n} < \infty.
\end{equation*}

\noindent Then

\begin{equation*}\label{e305}
       \frac{S_{n} - ES_{n}}{a_{n}} \to 0 \quad \mbox{a.s.}
\end{equation*}

\end{lemma}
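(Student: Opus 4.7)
My plan is to adapt the proof of Theorem~1 in \cite{ChGos92} (which establishes the analogous statement for $a_n=n$) to the general norming sequence $\{a_n\}$. The overall strategy is the classical subsequence-plus-interpolation argument of Etemadi type.

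First, fix $\alpha>1$ and set $n_j=\min\{n\geq 1:a_n\geq\alpha^j\}$; this is well-defined for all sufficiently large $j$ since $a_n$ is unbounded. Writing $T_n=S_n-ES_n$, Chebyshev's inequality combined with~\eqref{e302} gives
\[
P\bigl(|T_{n_j}|>\varepsilon a_{n_j}\bigr)\leq \frac{C}{\varepsilon^2}\cdot\frac{\sum_{k=1}^{n_j}Var(X_k)}{a_{n_j}^2}.
\]
Summing over $j$ and swapping the order of summation, the geometric growth $a_{n_j}\geq\alpha^j$ yields $\sum_{j:\,n_j\geq k}a_{n_j}^{-2}=O(a_k^{-2})$, so~\eqref{e303} gives overall summability. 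Borel--Cantelli then yields $T_{n_j}/a_{n_j}\to 0$ almost surely.

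For the interpolation step, non-negativity of the $X_k$ makes both $S_n$ and $ES_n$ non-decreasing. Hence for $n_j\leq n<n_{j+1}$ we have the sandwich
\[
T_{n_j}-(ES_{n_{j+1}}-ES_{n_j})\leq T_n\leq T_{n_{j+1}}+(ES_{n_{j+1}}-ES_{n_j}),
\]
and restricting to the sub-subsequence where the indices actually differ so that $a_{n_{j+1}}/a_{n_j}\leq\alpha$ reduces matters to controlling $(ES_{n_{j+1}}-ES_{n_j})/a_{n_j}$.

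The main obstacle is precisely this last control. The mean hypothesis asserts only boundedness of $ES_n/a_n$, not convergence, so the naive interpolation leaves an error of order $\alpha\,\limsup_j ES_{n_j}/a_{n_j}-\liminf_j ES_{n_j}/a_{n_j}$, which need not vanish as $\alpha\to 1$; in the i.i.d.\ setting this problem is invisible because $ES_n/n$ is exactly constant. Resolving it requires either a Rademacher--Menshov-type maximal inequality for $\max_{n_j<n\leq n_{j+1}}|T_n-T_{n_j}|$ derived from~\eqref{e302}, or a finer choice of subsequence along which $ES_{n_j}/a_{n_j}$ itself converges, and tracing the argument of~\cite{ChGos92} through this step is the non-trivial part of the proof.
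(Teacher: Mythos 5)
You have set up the right framework (Chebyshev plus Borel--Cantelli along a geometric subsequence, then a monotonicity sandwich), and you have correctly identified where it breaks; but you stop exactly there, so this is not a proof. The difficulty you name is genuine: with only $\sup_n ES_n/a_n<\infty$, the quantity $(ES_{n_{j+1}}-ES_{n_j})/a_{n_j}$ along blocks chosen geometrically in $a_n$ need not become small as $\alpha\downarrow 1$ (take $a_n=n$ and $ES_n$ constant on very long stretches with occasional jumps of size comparable to $n$). Neither of the two escape routes you gesture at works as stated. A Rademacher--Menshov-type maximal inequality cannot be "derived from \eqref{e302}", which bounds only $Var(S_n)$ and not the variances of partial maxima over sub-blocks, and in any case the orthogonal-series maximal inequality costs a $\log^2 n$ factor that condition \eqref{e303} does not absorb. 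Passing to a sub-subsequence along which $ES_{n_j}/a_{n_j}$ converges is also not enough, since you must simultaneously keep $a_{n_{j+1}}/a_{n_j}$ near $1$ and cover all indices. (There is also a smaller inaccuracy in your first step: with $n_j=\min\{n:a_n\ge\alpha^j\}$ one gets $\sum_{j:\,n_j\ge k}a_{n_j}^{-2}\le C_\alpha\,a_{k-1}^{-2}$, not $O(a_k^{-2})$, and \eqref{e303} does not control $\sum_k Var(X_k)/a_{k-1}^2$; this is repairable but not automatic.)

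For comparison: the paper does not prove this lemma at all --- it states it as a consequence of Theorem~1 of~\cite{ChGos92}, so the step you could not supply is precisely the content of the cited result. The standard resolution (Etemadi's 1983 argument for nonnegative variables, used by Chandra and Goswami) is to define the blocks through the auxiliary sequence $v_n=a_n+ES_n$ rather than through $a_n$ alone. Writing $A=\sup_n ES_n/a_n$, one has $a_n\le v_n\le(1+A)a_n$, and if $n_j=\min\{n: v_n>\alpha^j\}$, then the increment of $ES_n$ across a block is dominated by the increment of $v_n$, hence is at most $(\alpha-1)v_{n_j}+ (v_{n_j}-v_{n_j-1})$-type terms of order $(\alpha-1)(1+A)a_{n_j}$, which does vanish as $\alpha\downarrow 1$; the Chebyshev--Borel--Cantelli step still goes through because $v_{n_j}>\alpha^j$ forces geometric growth of $a_{n_j}$ up to the constant $1+A$. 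With that modification your outline can be completed; without it, the proof has a hole at its central step.
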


\begin{proof}[Proof of Theorem~\ref{T201}]

Note that for pairwise independent random variables $X_{n}$, the positive parts $X_{n}^{+} := max\{0, X_{n}\}$ are pairwise independent. Likewise, the $X_{n}^{-} := max\{0, -X_{n}\}$ are pairwise independent. Thus it is enough to prove the theorem separately for the positive and negative parts. So we can assume that $X_{n} \geq 0$ for all $n \geq 1$.
{\sloppy

}

Let $Y_{n} = X_{n} \mathbb{I}_{\{X_{n} \leq a_{n}\}}$, $T_{n} = \sum_{k=1}^{n} Y_{k}$ for every $n \geq 1$. To prove the theorem , it is sufficient to show that
{\sloppy

}

\begin{equation}\label{e306}
\frac{ES_{n}-ET_{n}}{a_{n}} \to 0 \quad (n \to \infty),
\end{equation}

\begin{equation}\label{e307}
\frac{T_{n}-ET_{n}}{a_{n}} \to 0 \quad \mbox{a.s.},
\end{equation}

\begin{equation}\label{e308}
\frac{S_{n}-T_{n}}{a_{n}} \to 0 \quad \mbox{a.s.}
\end{equation}

Note that for any non-negative random variable $Z$ and $a > 0$

\begin{equation}\label{e309}
E(Z \mathbb{I}_{\{Z > a\}}) = a P(Z > a) + \int_{a}^{\infty} P(Z > x) \, dx
\end{equation}

\noindent and

\begin{equation}\label{e310}
E(Z \mathbb{I}_{\{Z \leq a\}}) \leq \int_{0}^{a} P(Z > x) \, dx.
\end{equation}

Fix an integer $N \geq 1$. Then, using~\eqref{e201} and~\eqref{e309}, for $n > N$ we obtain

\begin{multline*}
ES_{n}-ET_{n} = \sum_{k=1}^{n} E(X_{k} \mathbb{I}_{\{X_{k} > a_{k}\}}) \\ = \sum_{k=1}^{n} a_{k} P(X_{k} > a_{k}) + \sum_{k=1}^{n} \int_{a_{k}}^{\infty} P(X_{k} > x) \, dx \\ = \sum_{k=1}^{n} a_{k} P(X_{k} > a_{k}) + \sum_{k=1}^{N} \int_{a_{k}}^{\infty} P(X_{k} > x) \, dx + \sum_{k=N+1}^{n} \int_{a_{k}}^{\infty} P(X_{k} > x) \, dx \\ \leq \sum_{k=1}^{n} a_{k} P(X_{k} > a_{k}) + \sum_{k=1}^{N} \int_{0}^{\infty} P(X_{k} > x) \, dx + \sum_{k=1}^{n} \int_{a_{N}}^{\infty} P(X_{k} > x) \, dx \\ \leq \sum_{k=1}^{n} a_{k} P(X_{k} > a_{k}) + a_{N} \int_{0}^{\infty} G(x) \, dx + a_{n} \int_{a_{N}}^{\infty} G(x) \, dx.
\end{multline*}

\noindent Condition~\eqref{e202} and Kronecker's lemma (see, for example,~\cite{Petr95}) imply that

\begin{equation*}\label{e311}
\frac{1}{a_{n}} \sum_{k=1}^{n} a_{k} P(X_{k} > a_{k}) \to 0 \quad (n \to \infty).
\end{equation*}

\noindent Thus for each $N \geq 1$ we have

\begin{equation*}\label{e312}
\limsup_{n \to \infty} \frac{1}{a_{n}} (ES_{n}-ET_{n}) \leq \int_{a_{N}}^{\infty} G(x) \, dx
\end{equation*}

\noindent so we get assertion~\eqref{e306} by letting $N \to \infty$.

To establish~\eqref{e307} we shall prove that conditions of Lemma~\ref{Lem301} are satisfied for sequence $\{Y_{n}\}_{n=1}^{\infty}$. It follows from pairwise independence of random variables $Y_{n}$ that assertion~\eqref{e302} is satisfied for sequence $\{Y_{n}\}_{n=1}^{\infty}$.
{\sloppy

}

Using~\eqref{e201} and~\eqref{e310}, we obtain

\begin{multline*}
\sum_{n=1}^{\infty} \frac{Var(Y_{n})}{a_{n}^{2}} \leq \sum_{n=1}^{\infty} \frac{E \left(X_{n}^{2} \mathbb{I}_{\{X_{n} \leq a_{n}\}} \right)}{a_{n}^{2}} \leq \sum_{n=1}^{\infty} \frac{1}{a_{n}^{2}} \int_{0}^{a_{n}^{2}} P(X_{n} > x^{1/2}) \, dx \\ \leq 2 \sum_{n=1}^{\infty} \frac{1}{a_{n}^{2}} \int_{0}^{a_{n}} y P(X_{n} > y) \, dy \leq 4 \sum_{n=1}^{\infty} \frac{1}{([a_{n}]+1)^{2}} \int_{0}^{[a_{n}]+1} y P(X_{n} > y) \, dy \\ \leq 8 \sum_{n=1}^{\infty} \sum_{j=[a_{n}]+1}^{\infty} \frac{1}{j^{3}} \sum_{k=1}^{[a_{n}]+1} \int_{k-1}^{k} y P(X_{n} > y) \, dy \\ \leq 8 \sum_{n=1}^{\infty} \sum_{j=[a_{n}]+1}^{\infty} \frac{1}{j^{3}} \sum_{k=1}^{j} \int_{k-1}^{k} y P(X_{n} > y) \, dy \\ \leq 8 \sum_{j=[a_{1}]+1}^{\infty} \sum_{n:a_{n} \leq j} \frac{1}{j^{3}} \sum_{k=1}^{j} \int_{k-1}^{k} y P(X_{n} > y) \, dy \\ \leq 8 \sum_{j=1}^{\infty} \sum_{k=1}^{j} \frac{1}{j^{2}} \int_{k-1}^{k} y \frac{\sum_{n:a_{n} \leq j} P(X_{n} > y)}{j} \, dy \\ \leq 8 \sum_{j=1}^{\infty} \sum_{k=1}^{j} \frac{1}{j^{2}} \int_{k-1}^{k} y G(y) \, dy = 8 \sum_{k=1}^{\infty} \sum_{j=k}^{\infty} \frac{1}{j^{2}} \int_{k-1}^{k} y G(y) \, dy \\ \leq 16 \sum_{k=1}^{\infty} \frac{1}{k} \int_{k-1}^{k} y G(y) \, dy \leq 16 \sum_{k=1}^{\infty} \int_{k-1}^{k} G(y) \, dy = 16 \int_{0}^{\infty} G(y) \, dy < \infty.
\end{multline*}

\noindent where $[a_{n}]$ is the integer part of $a_{n}$. Hence condition~\eqref{e303} is satisfied for sequence $\{Y_{n}\}_{n=1}^{\infty}$.
{\sloppy

}

For each $n \geq 1$ we have

\begin{equation*}\label{e314}
\frac{1}{a_{n}} \sum_{k=1}^{n} EY_{k} \leq \frac{1}{a_{n}} \sum_{k=1}^{n} EX_{k} = \frac{1}{a_{n}} \sum_{k=1}^{n} \int_{0}^{\infty} P(X_{k} > x) \, dx \leq \int_{0}^{\infty} G(x) \, dx < \infty.
\end{equation*}

\noindent Thus the sequence of random variables $\{Y_{n}\}_{n=1}^{\infty}$ satisfies conditions of Lemma~\ref{Lem301}, so relation~\eqref{e307} holds.
{\sloppy

}

To complete the proof it remains to verify assertion~\eqref{e308}. Using~\eqref{e202}, we obtain

\begin{equation*}\label{e315}
\sum_{n=1}^{\infty} P(X_{n} \ne Y_{n}) = \sum_{n=1}^{\infty} P(X_{n} > a_{n}) < \infty.
\end{equation*}

\noindent From Borel--Cantelli lemma and relation~\eqref{e307} it follows that

\begin{equation}\label{e316}
\frac{S_{n}-ET_{n}}{a_{n}} \to 0 \quad \mbox{a.s.}
\end{equation}

\noindent Thus~\eqref{e308} follows from~\eqref{e307} and~\eqref{e316}.
\end{proof}

\begin{proof}[Proof of Theorem~\ref{T202}]

As $H(x)$ is non-increasing in the interval $x \geq 0$, condition $\int_{0}^{\infty} H(x) \, dx < \infty$ implies that $\sum_{k=0}^{\infty} 2^{k} H(2^{k}) < \infty$. Thus using~\eqref{e205} we have

\begin{multline*}\label{e317}
\sum_{n=2}^{\infty} P(|X_{n}|>n) = \sum_{k=0}^{\infty} \sum_{n=2^{k}+1}^{2^{k+1}} P(|X_{n}|>n) \\ \leq \sum_{k=0}^{\infty} \sum_{n=2^{k}+1}^{2^{k+1}} P(|X_{n}|>2^{k}) \leq \sum_{k=0}^{\infty} \sum_{n=1}^{2^{k+1}} P(|X_{n}|>2^{k}) \leq \sum_{k=0}^{\infty} 2^{k+1} H(2^{k}) < \infty.
\end{multline*}

\noindent Now, taking into account that~\eqref{e205} implies~\eqref{e105}, we can conclude that the desired result follows from Theorem~\ref{P103}.
\end{proof}

\bigskip

\end{document}